\newcommand{\Rr}{\mathcal R}
 \newcommand{\RR}{\mathbf{R}}  
 \newcommand{\BB}{\mathbf{B}}  
 \newcommand{\area}{\operatorname{area}}
 \newcommand{\Tan}{\operatorname{Tan}}
\newcommand{\ee}{\mathbf e}
\newcommand{\Hh}{\mathcal{H}}
\def\begfig {
\begin{figure}
\small }
\def\endfig {
\normalsize
\end{figure}
}
    \newtheorem{theorem}    {Theorem}   
    \newtheorem{lemma}      [theorem]       {Lemma}
    \newtheorem{corollary}  [theorem]     {Corollary}
    \newtheorem{proposition}       [theorem]       {Proposition}
    \newtheorem*{theorem*}{Theorem}
    \theoremstyle{definition}
    \theoremstyle{definition}
    \newtheorem{remark}   [theorem]       {Remark}
\title[Huisken's Monotonicity]{The Boundary Term in Huisken's Monotonicity Formula and 
  the Entropy of Translators }
\author{Brian White}
\address{Department of Mathematics\\ Stanford University\\ Stanford, CA 94305}
\subjclass[2020]{Primary 53E10}
\keywords{Mean curvature flow, monotonicity, entropy, boundary}
\date{4 April, 2022.  Revised 23 July, 2023.}
\begin{document}

\begin{abstract}
For a manifold-with-boundary moving by mean curvature flow, the entropy
 at a later time is bounded by the entropy at an earlier time plus a boundary term.  
 This paper controls the boundary term in a geometrically natural way.
 In particular, it shows (under mild hypotheses) 
 that the entropy of a compact translator
 is less than or equal to the entropy of the boundary plus the maximal cone density of the boundary.
 \end{abstract}

\maketitle

\newcommand{\pdf}[2]{\frac{\partial #1}{\partial #2}}
\newcommand{\pdt}[1]{\frac{\partial #1}{\partial t}}

\newcommand{\entropy}{\operatorname{entropy}}
\newcommand{\mcd}{\operatorname{mcd}}
\newcommand{\mdr}{\operatorname{mdr}}
\newcommand{\tM}{\tilde M}
\newcommand{\tF}{\tilde F}
\newcommand{\tmu}{\tilde \mu}

\section{Introduction}
For a closed surface moving by mean curvature flow in Euclidean space, Huisken's monotonicity formula~\cite{huisken-asymptotic}*{\S3}
implies that a certain weighted area decreases in time.  That in turn implies that
the entropy of the surface is a decreasing function of time.

For mean curvature flow of surfaces with boundary (where the motion of the boundary is prescribed), the entropy need not decrease, because the analog of Huisken's monotonicity formula includes a spacetime boundary integral.    In order to bound entropy at one time in terms of the entropy at an early time, it is necessary to control the boundary integral.

In this paper, we control the boundary integral in a 
geometrically natural way. 
(For the easier case of non-moving boundaries,
 see~\cite{white-mcf-boundary}*{Theorem~7.1}.)
 In particular, we show that it is bounded by the Gaussian area of the surface swept out
by a certain time-dependent rescaling of the the boundary.

As an application, we prove a simple, explicit bound for the entropy of a compact
translator with boundary, provided the boundary lies in a hyperplane or finite union
of hyperplanes perpendicular to the direction of translation.  
The surface can be of any dimension and codimension.  For example, as a special case, we have

\begin{theorem}\label{convex-intro-theorem}
Consider an $m$-dimensional compact surface $M$ in $\RR^{m+1}$ that translates with velocity $v\ee_{m+1}$ under mean curvature flow.  Suppose that $\partial M$ consists of $k$ components, each of which is the boundary of a convex region in a horizontal $m$-plane.  Then the entropy of $M$ is at most 
\[
   k \left( 1 + \frac{m\omega_m}{\omega_{m-1}} \right).
\]
\end{theorem}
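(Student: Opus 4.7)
The plan is to invoke the main theorem of this paper, announced in the abstract: for a compact translator $M$ (with the appropriate hypotheses),
\[
\entropy(M) \leq \entropy(\partial M) + \mcd(\partial M).
\]
Both $\entropy$ and $\mcd$ are subadditive under disjoint unions---the former because the Gaussian area functional is additive in the surface and one takes a supremum, and the latter because the density at a common apex of a union of cones is bounded by the sum of the individual apex-densities. It therefore suffices to show that each component $C_j := \partial \Omega_j$ of $\partial M$ satisfies
\[
\entropy(C_j) \leq \frac{m\omega_m}{\omega_{m-1}} \qquad \text{and} \qquad \mcd(C_j) \leq 1,
\]
after which summing over the $k$ components yields the stated bound.

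For the entropy estimate I would use Cauchy's integral-geometric identity
\[
\frac{1}{2\omega_{m-1}} \int_{\SS^{m-1}} |u \cdot \nu|\, du = 1 \quad \text{for any unit } \nu \in \RR^m,
\]
with $\nu(x)$ chosen to be the outward unit normal to $C_j$ taken within the horizontal hyperplane $H \subset \RR^{m+1}$ that contains it. Inserting this into the Huisken functional $F_{x_0, t_0}(C_j)$ and swapping the order of integration reduces the problem to bounding, for each fixed $u \in \SS^{m-1}$, the inner integral $\int_{C_j} (4\pi t_0)^{-(m-1)/2} e^{-|x-x_0|^2/(4t_0)} |u \cdot \nu(x)|\, dA$. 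By coarea along direction $u$ and convexity of $\Omega_j$---every line parallel to $u$ meets $C_j$ in at most two points---this inner integral is bounded by twice the $(m-1)$-dimensional Gaussian integrated over the projected region in $u^\perp$, and hence by $2$. Averaging the uniform bound $2$ over $\SS^{m-1}$ against the normalizing factor $1/(2\omega_{m-1})$ gives $F_{x_0, t_0}(C_j) \leq m\omega_m/\omega_{m-1}$. A component of $x_0$ of height $h$ off $H$ contributes only an extra factor $e^{-h^2/(4t_0)} \leq 1$, so the bound persists uniformly over $x_0 \in \RR^{m+1}$.

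For the cone-density bound, the cone $K_p$ from a point $p \in \RR^{m+1}$ over $C_j$ is an $m$-dimensional set whose density at the apex equals the $(m-1)$-volume of the radial projection of $C_j$ onto the unit $m$-sphere centered at $p$, divided by $m\omega_m$. If $p \in \Omega_j \subset H$, every ray from $p$ inside $H$ meets $C_j$ exactly once, so $K_p = H$ and the density equals $1$; if $p \in H \setminus \Omega_j$, generic rays from $p$ hit $C_j$ twice, but the tangent cone from $p$ to $\Omega_j$ subtends at most a half-space within $H$, keeping the density $\leq 1$; and if $p \notin H$, a direct calculation shows the density is strictly less than $1$. Therefore $\mcd(C_j) \leq 1$.

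The main obstacle will be reconciling my ad hoc interpretations of $\entropy$ and $\mcd$ with the precise definitions used in the paper---especially verifying subadditivity of $\mcd$ and handling the multiplicity conventions when the apex lies outside $\Omega_j$ within $H$. Once the definitions are aligned, the integral-geometric bound on $\entropy(C_j)$ and the solid-angle argument for $\mcd(C_j) \leq 1$ combine cleanly with the main theorem to give the claim.
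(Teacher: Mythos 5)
Your proposal is correct and, at the top level, follows the same route as the paper: invoke the general entropy bound for translators and then estimate the two boundary terms for each convex component. Where you differ is in how the entropy term is handled. The paper deduces Theorem~\ref{convex-intro-theorem} from Corollary~\ref{intro-density-corollary}: it uses $\entropy(\Sigma)\le\mdr(\Sigma)$ (the inequality~\eqref{entropy-mdr}, quoted from \cite{white-mcf-boundary}) and then proves $\mdr(\Sigma)\le m\omega_m/\omega_{m-1}$ for the boundary of a convex region in an $m$-plane by the nearest-point (Lipschitz) projection argument of Proposition~\ref{mdr-proposition}, while simply asserting $\mcd(\Sigma)=1$. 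You instead bound the Gaussian functional of each component directly by Cauchy's integral-geometric formula together with the fact that a line meets a convex boundary in at most two points; your computation is correct and yields the same constant $m\omega_m/\omega_{m-1}$ (using $\Hh^{m-1}(\SS^{m-1})=m\omega_m$), and it has the merit of being self-contained, not passing through the $\mdr$ comparison. The paper's route is shorter given Proposition~\ref{mdr-proposition} and the cited entropy--$\mdr$ inequality. Two caveats. First, since the $k$ components may lie in \emph{different} horizontal $m$-planes, you should invoke the ``more generally'' form of Theorem~\ref{general-intro-theorem}, namely $\entropy(M)\le\sum_k\bigl(\entropy(\Sigma_k)+\mcd(\Sigma_k)\bigr)$, rather than the single-hyperplane form followed by subadditivity; your subadditivity observations for $\entropy$ and $\mcd$ are correct, but they do not repair the hypothesis of the single-plane statement when the planes differ. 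Second, in your $\mcd$ estimate the case of an apex $p$ off the plane is not a mere ``direct calculation'': the clean argument is that the radial image of $\Omega_j$ on the unit sphere about $p$ is a spherically convex region contained in an open hemisphere, whose boundary (the radial image of $C_j$) has $\Hh^{m-1}$-measure at most that of a great $(m-1)$-sphere, i.e.\ at most $m\omega_m$, giving density at most $1$. With that supplied, your bound $\mcd(C_j)\le 1$ matches (indeed substantiates) the paper's unproved assertion $\mcd(\Sigma)=1$, and the proof closes as you describe.
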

Here $\omega_n$ denotes the volume of the unit ball in $\RR^n$.

\begin{corollary}
For such a surface $M$,
\begin{equation}\label{mdr-bound}
  \frac{\area(M\cap\BB(x,r))}{\omega_m r^m}  \le C_m k
\end{equation}
for all balls $\BB(x,r)$, where $C_m$ depends only on $m$.
\end{corollary}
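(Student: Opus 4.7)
The plan is to deduce the corollary directly from Theorem~\ref{convex-intro-theorem}, using the standard comparison between Huisken's entropy and Euclidean area density ratios. Concretely, I would proceed in two short steps: first reduce arbitrary area-density bounds to an entropy bound, then invoke Theorem~\ref{convex-intro-theorem}.

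For the first step, fix $x_0 \in \RR^{m+1}$ and $r>0$, and test the entropy functional with basepoint $x_0$ at scale $t_0 = r^2$. On $\BB(x_0,r)$ the Gaussian weight satisfies $(4\pi r^2)^{-m/2}\,e^{-|x-x_0|^2/(4r^2)} \ge (4\pi r^2)^{-m/2}\,e^{-1/4}$, so
$$
\entropy(M)\;\ge\;\int_{M\cap\BB(x_0,r)} (4\pi r^2)^{-m/2}\,e^{-|x-x_0|^2/(4r^2)}\,d\Hh^m
\;\ge\;\frac{e^{-1/4}}{(4\pi)^{m/2}\,r^m}\,\area(M\cap\BB(x_0,r)).
$$
Rearranging and dividing by $\omega_m r^m$ gives
$$
\frac{\area(M\cap\BB(x_0,r))}{\omega_m r^m}\;\le\; D_m\,\entropy(M),\qquad D_m := \frac{e^{1/4}(4\pi)^{m/2}}{\omega_m}.
$$

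For the second step, Theorem~\ref{convex-intro-theorem} bounds $\entropy(M) \le k\bigl(1+m\omega_m/\omega_{m-1}\bigr)$, yielding the corollary with
$$
C_m \;=\; \frac{e^{1/4}(4\pi)^{m/2}}{\omega_m}\left(1+\frac{m\omega_m}{\omega_{m-1}}\right).
$$

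There is no real obstacle here: the content of the corollary is packaged entirely inside Theorem~\ref{convex-intro-theorem}, and the passage from an entropy bound to an area-density bound at every scale is a one-line consequence of the definition of entropy.
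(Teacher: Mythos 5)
Your proposal is correct and follows essentially the same route as the paper: the corollary is Theorem~\ref{convex-intro-theorem} combined with the fact that density ratios are controlled by entropy, which the paper simply cites as~\eqref{entropy-mdr} while you prove it inline by testing the entropy with the Gaussian centered at $x_0$ at scale $r$ and using $e^{-|x-x_0|^2/(4r^2)}\ge e^{-1/4}$ on $\BB(x_0,r)$. The only difference is that you supply an explicit constant $C_m$ where the paper invokes the cited comparison theorem.
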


The corollary follows from the theorem because density ratios 
(for any surface) are bounded by 
a constant times the entropy; see~\eqref{entropy-mdr} below.
The bound~\eqref{mdr-bound} plays a key role in constructing
families of complete, non-rotationally invariant translating
 annuli in~$\RR^3$~\cite{annuloids}.
 
See Section~\ref{vertical-section} for the analogous results for boundaries $\partial M$ 
that may also contain some vertical components.

We remark that Ilmanen's elliptic regularization~\cite{ilmanen}
 obtains very general mean curvature flows by taking
suitable limits of sequences of translators.  The bounds in this paper imply entropy bounds for
such limits.

We now describe the general result for translators.

If $\Sigma$ is a $(m-1)$-dimensional submanifold of Euclidean space, we let
\[
C(\Sigma) := \{rx: x\in \Sigma, \, r> 0\}
\]
be the cone over $\Sigma$ with vertex at the origin.  The {\bf density} of the cone is 
\[
  \Theta(C(\Sigma)) := \frac{\area(C(\Sigma)\cap \BB(0,r))}{\omega_m r^m},
\]
where $\omega_m$ is the volume of the unit ball in $\RR^n$.   (Note that the right hand side does not depend on $r$.)  Here, area should be counted with multiplicity.    

We define the {\bf maximal cone density} of $\Sigma$ to be 
\[
  \mcd(\Sigma): = \sup_{v\in \RR^n} \Theta(C(\Sigma+v)).
\]

\begin{theorem}\label{general-intro-theorem}
Suppose that $M$ is a compact $m$-dimensional manifold in $\RR^n$ that translates
with velocity $v\ee_n$ under mean curvature flow.  Suppose that $\partial M$ lies in a horizontal hyperplane.
Then
\[
   \entropy(M) 
   \le
    \entropy(\partial M) + \mcd(\partial M).
\]
More generally, if $\partial M=\cup_i\Sigma_i$, where each $\Sigma_i$ lies in a horizontal hyperplane $P_i$, then
\[
 \entropy(M) \le \sum_i \left( \entropy(\Sigma_i) + \mcd(\Sigma_i) \right).
\]
\end{theorem}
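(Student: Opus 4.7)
The plan is to apply the paper's main theorem---which bounds Huisken's spacetime boundary integral geometrically---to the eternal translating flow $t \mapsto M_t := M + t v \ee_n$. Because the boundary contributions in Huisken's formula decompose additively over connected components, it suffices to prove the single-component bound. Since $M_t$ is compact and moves to infinity in the $-v\ee_n$ direction as $t \to -\infty$, the Gaussian density $F_{(x_0, t_0)}(M_t)$ decays to $0$; sending the initial time to $-\infty$ then gives
\[
F_{(x_0, t_0)}(M) \;\le\; \int_{\Sigma^*} \frac{e^{-|y|^2/4}}{(4\pi)^{m/2}}\, d\Hh^m(y),
\]
where $\Sigma^*$ is the $m$-dimensional surface in $\RR^n$ obtained from the boundary's spacetime trace by the paper's time-dependent rescaling. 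Setting $\tilde x_0 := x_0 - t_0 v \ee_n$ and $\lambda := \sqrt{t_0 - t}$, one checks that $\Sigma^*$ is parametrized by
\[
y(\sigma, \lambda) \;=\; \frac{\sigma - \tilde x_0}{\lambda} \,-\, \lambda v \ee_n, \qquad \sigma \in \Sigma, \ \lambda \in (0,\infty).
\]

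The heart of the argument is to show that this Gaussian area of $\Sigma^*$ is bounded by $\entropy(\Sigma) + \mcd(\Sigma)$, uniformly in $\tilde x_0$. Since $\Sigma$ lies in a horizontal hyperplane $P = \{x_n = h\}$, the component of $\partial y/\partial \lambda$ normal to $T\Sigma$ splits orthogonally into a \emph{horizontal} piece in the normal space of $\Sigma$ within $P$, of magnitude $|(\sigma - \tilde x_0)^{\perp_P}|/\lambda^2$, and a \emph{vertical} piece along $\ee_n$, of magnitude $|(h - \tilde z_0) + v\lambda^2|/\lambda^2$, where $\tilde z_0$ is the $\ee_n$-coordinate of $\tilde x_0$. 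Applying the pointwise inequality $\sqrt{A^2+B^2} \le A + B$ to the Jacobian splits the Gaussian area of $\Sigma^*$ additively. For the horizontal piece, the substitution $\mu = 1/\lambda$ followed by the standard integral $\int_0^\infty \mu^{m-1} e^{-\mu^2 r^2/4}\,d\mu = 2^{m-1}\Gamma(m/2)/r^m$ and the identity $m\omega_m = 2\pi^{m/2}/\Gamma(m/2)$ yields exactly the density $\Theta(C(\Sigma - \tilde x_0))$ of the cone over $\Sigma$ with vertex at $\tilde x_0$, hence a bound by $\mcd(\Sigma)$. For the vertical piece, the algebraic identity $a^2/\lambda^2 - 2av + v^2\lambda^2 = (a/\lambda - v\lambda)^2$ (with $a := h - \tilde z_0$) absorbs the height offset into the Gaussian variable, so that after substituting $s = \lambda^2$ the integrand reads as a weight in $s$ times $F_{(\tilde x_0 + sv\ee_n,\,s)}(\Sigma) \le \entropy(\Sigma)$, with the weight integrating to $1$.

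The main obstacle will be the bookkeeping in the vertical-piece reduction: one must track all shift factors (especially the boundary-position factor $e^{av/2}$) precisely so that they cancel and the remaining weight in $s$ indeed integrates to $1$, rather than a constant depending on $a$ or $v$. The higher-codimension setting also requires the normal-space decomposition to accommodate all horizontal normals of $\Sigma$ within $P$, not just a single conormal. Summing over the boundary components $\Sigma_k \subset P_k$ and taking the supremum over spacetime centers $(x_0, t_0)$ completes the proof.
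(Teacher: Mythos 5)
Your proposal is correct and follows essentially the same route as the paper: Huisken's monotonicity with the boundary term, together with the vanishing of the rescaled Gaussian area as $t\to-\infty$, reduces the problem to bounding the Gaussian area of the rescaled boundary sweep, which you split---exactly as the paper does via its slicing lemma $J+|\nabla_S h|\ge 1$, here in the parametrized form $\sqrt{A^2+B^2}\le A+B$---into a horizontal/cone part bounded by $\mcd(\Sigma)$ and a vertical part bounded by $\entropy(\Sigma)$ times a weight that integrates to at most $1$ by counting positive roots of the same quadratic as in the paper's calculus lemma. The only cosmetic differences are that you vary the Gaussian center $(x_0,t_0)$ where the paper instead applies its origin-centered bound to all translates and dilates of $M$, and that your horizontal term is really the density of the cone with vertex at the projection of $\tilde x_0$ onto the boundary hyperplane (not at $\tilde x_0$ itself), which is still $\le \mcd(\Sigma)$.
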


Recall that if $S$ is a $d$-dimensional embedded submanifold of $\RR^n$,
 then 
 the {\bf maximal density ratio} of $S$ is 
\[
 \mdr(S) := \sup_{x\in\RR^n, \, r>0} \frac{\Hh^d(S\cap \BB(x,r))}{\omega_d r^d}.
\]
(If $S$ is an immersed variety, then the $d$-dimensional area should be counted
with multiplicity.)
Entropy and maximal density ratio are closely related: 
\begin{equation}\label{entropy-mdr}
   \entropy(S) \le \mdr(S) \le c_d \entropy(S),
\end{equation}
where $d=\dim(S)$.  (See~\cite{white-mcf-boundary}*{Theorem~9.1}.)
Thus as a consequence of Theorem~\ref{general-intro-theorem}, we have

\begin{corollary}\label{intro-density-corollary}
If $M$ is compact, $m$-dimensional translator in $\RR^n$ with velocity $v\ee_n$ and if $\partial M$
lies in a horizontal hyperplane, then
\[
 \entropy(M) \le   \mdr(\partial M) + \mcd(\partial M),
\]
and
\[
  \mdr(M) \le c_m (  \mdr(\partial M) + \mcd(\partial M)).
\]
More generally, if $\partial M = \cup_i \Sigma_i$, where each $\Sigma_i$ lies
in a horizontal hyperplane, then 
\[
 \entropy(M) \le \sum_i\left( \mdr(\Sigma_i) + \mcd(\Sigma_i) \right),
\]
and
\[
  \mdr(M) \le c_m \sum_i (  \mdr(\Sigma_i) + \mcd(\Sigma_i)).
\]
\end{corollary}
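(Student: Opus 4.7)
The plan is to deduce this corollary as a purely formal consequence of Theorem~\ref{general-intro-theorem}, using only the entropy/maximal density ratio equivalence~\eqref{entropy-mdr} already recorded in the introduction. No new geometric input is needed; the argument is a two-step chain of inequalities. The main (and only) substantive obstacle is that Theorem~\ref{general-intro-theorem} is itself the principal result of the paper, so the corollary is truly a corollary in the strict sense: once that theorem is available, the rest is just algebraic manipulation.

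First I would prove the initial entropy bound. Apply~\eqref{entropy-mdr} to the $(m-1)$-dimensional submanifold $\partial M$ to obtain $\entropy(\partial M)\le \mdr(\partial M)$. Substituting this into the conclusion of Theorem~\ref{general-intro-theorem} immediately yields
\[
\entropy(M)\;\le\; \entropy(\partial M)+\mcd(\partial M)\;\le\; \mdr(\partial M)+\mcd(\partial M),
\]
which is the first displayed inequality of the corollary.

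Next I would pass from entropy to density ratio on $M$ itself. Applying the other half of~\eqref{entropy-mdr} with $d=\dim(M)=m$ gives $\mdr(M)\le c_m\,\entropy(M)$, and chaining with the bound just obtained produces
\[
\mdr(M)\;\le\; c_m\bigl(\mdr(\partial M)+\mcd(\partial M)\bigr).
\]
This is exactly the second displayed inequality.

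For the multi-component case, where $\partial M=\bigcup_k\Sigma_k$ with each $\Sigma_k$ in a horizontal hyperplane, the argument is formally identical: invoke the second (summed) conclusion of Theorem~\ref{general-intro-theorem} in place of the first, then replace each $\entropy(\Sigma_k)$ by $\mdr(\Sigma_k)$ via~\eqref{entropy-mdr} inside the sum, and finally apply $\mdr(M)\le c_m\entropy(M)$ to obtain the last displayed inequality. Since each step is a direct substitution, no case analysis or estimation is needed beyond what is already encoded in Theorem~\ref{general-intro-theorem} and~\eqref{entropy-mdr}, and the proof is essentially one line once those are in hand.
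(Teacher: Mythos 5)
Your proposal is correct and matches the paper's intended derivation: the corollary is stated there as an immediate consequence of Theorem~\ref{general-intro-theorem} combined with the inequalities $\entropy(S)\le\mdr(S)\le c_d\,\entropy(S)$ of~\eqref{entropy-mdr}, applied exactly as you do (to $\partial M$ or the $\Sigma_k$ for the first substitution, and to $M$ with $d=m$ for the second). Nothing further is needed.
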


Theorem~\ref{convex-intro-theorem} follows from
 Corollary~\ref{intro-density-corollary} because if $\Sigma$ is the boundary of a convex region in a $m$-plane, then
\[
   \mdr(\Sigma)\le \frac{m\omega_m}{\omega_{m-1}}
\]
(see Proposition~\ref{mdr-proposition}), and
\[
  \mcd(\Sigma)=1.
\]

\begin{remark}
If $\Sigma$ is a closed curve in $\RR^n$,
 then, according to~\cite{EWW}*{Theorem~1.1},
\[
  \mcd(\Sigma) = \frac1{2\pi}(\operatorname{TotalCurvature}(\Sigma)).
\]
\end{remark}

\section{The Monotonicity Inequality}

Consider  a closed, compact $(m-1)$-dimensional manifold $\Sigma$, and let 
\[
   F: \Sigma\times (-\infty,0) \to \RR^n
\]
be a $1$-parameter family of embeddings of $\Sigma$.  Let
$\Gamma(t)$ be the image of $F(\cdot,t)$:
\[
  \Gamma(t)= F(\Sigma,t).
\]
For $q\in \Gamma(t)$, let $\dot\Gamma(q,t)$ be the normal velocity of
 $\Gamma(t)$ at $q$:
\[
\dot \Gamma(q,t) = \left( \pdt{} F(x,t) \right)^\perp, \, \text{where $F(x,t)=q$.}
\]
Here $(\cdot)^\perp$ denotes the component perpendicular to $\Tan(\Gamma(t),q)$.

We will use the following rescaling of $F$:
\[
 \tF(x,t) = \frac{F(x,t)}{|t|^{1/2}}.
\]
We let $d\mu$ and $d\tmu$ be the $(m-1)$-dimensional volume measure on $\Sigma$
corresponding to $\Hh^{m-1}$ on $\Gamma(t)= F(\Sigma,t)$ and on $\tF(\Sigma,t)$.
Thus $d\mu = |t|^{(m-1)/2} \,d\tmu$.

Let $t\in (-\infty, 0)\mapsto M(t)$ be a one-parameter family of $m$-dimensional manifolds-with-boundary such that
\[
   \partial M(t) = \Gamma(t)
\]
and such that $M(t)$ moves by mean curvature flow: for each $x\in M(t)$, the normal
velocity at $(x,t)$ is the mean curvature vector $H(x,t)$ of $M(t)$ at $x$.
For $x\in \Gamma(t)$, we let $\nu_M(x,t)$ be the unit vector that it tangent to $M(t)$, normal to $\Gamma(t)$, and that points out from $M(t)$.
We let $\tM(t)$ be the rescaled surface
\begin{equation}\label{rescaled}
   \tM(t) = \frac{M(t)}{|t|^{1/2}}. 
\end{equation}

(The reader may wonder why we use the flow~\eqref{rescaled} rather than the
standard renormalized flow 
\begin{equation}
   \tau \mapsto \tM(-e^{-\tau}).
\end{equation}
The latter flow has a nicer equation of motion, but in this paper, there is no advantage 
in changing the time variable.)

More generally, $M(\cdot)$ can be a Brakke flow with
 boundary $\Gamma(\cdot)$.  (See~\cite{white-mcf-boundary}*{Definition~ 8.1}.)
   In this case, the vector $\nu(x,t)$ is a vector of length 
  $\le 1$ that is perpendicular to $\Gamma(t)$ at $x$.  
If we think of $M(t)$ as a non-equilibrium soap film, then $-\nu(x,t)$ is the force per
unit $(m-1)$-dimensional measure that the soap film exerts on the
 boundary $\Gamma(t)$ at $x$.

Define
\begin{align*}
\rho_m(x,t) &= \frac1{(4 \pi |t|)^{m/2}} \exp\left( -\frac{|x|^2}{4|t|} \right), \\
\Phi_m(x) &= \rho(x,-1) = (4\pi)^{-m/2} \exp\left( -\frac14|x|^2 \right).
\end{align*}
Thus
\[
 \rho_m(x,t) = |t|^{-m/2} \Phi_m(x/|t|^{1/2}).
\]
We will sometimes write $\rho$ and $\Phi$ for $\rho_m$ and $\Phi_m$ when the $m$ is clear from the context.

If $S$ is an $m$-dimensional submanifold of $\RR^n$, we define its {\bf $\Phi$-area} to be
\[
  \Phi_m[S] :=\int_S \Phi_m \,d\Hh^m.
\]
The {\bf entropy} of $S$ is the supremum of $\Phi_m[S']$ among all surfaces $S'$ 
obtained from $S$ by translation and dilation.

Note that
\begin{align*}
\int_{x\in M(t)}\rho_m(x,t)\,d\Hh^mx
&=
\int_{x\in M(t)}\Phi_m(x/|t|^{1/2}) |t|^{-m/2}\,d\Hh^mx \\
&=\int_{y\in \tM(t)} \Phi_m(y) \, d\Hh^my \\
&=\Phi_m[\tM(t)].
\end{align*}

We now use Huisken's Monotonicity Inequality, modified for surfaces with boundary: 
see~\cite{white-mcf-boundary}*{Theorem~18.3}.  
(Note: the terms $K$ and $A$ in that theorem are $0$ here because the ambient space
is Euclidean.)  The monotonicity theorem states that for $a<b<0$,
\begin{equation}
\begin{aligned}
\Phi_m[\tM(b)] - \Phi_m[\tM(a)]
&\le
\int_{t=a}^b 
\int_{\Gamma(t)} 
\nu_M\cdot \left(  \dot\Gamma - \frac{\nabla \rho}\rho \right)\rho \, d\Hh^{m-1} \,dt
\\
&=
\int_{t=a}^b 
\int_{q\in \Gamma(t)} 
\nu_M\cdot \left( \dot\Gamma + \frac{q}{2t} \right)\rho \, d\Hh^{m-1} \,dt.
\end{aligned}
\end{equation}
(Here $\rho=\rho_m$.)
We can express this last quantity, $Q$, in terms of $F$:
\begin{align*}
Q
&=
\int_{t=a}^b 
\int_{\Sigma} 
\nu_M\cdot 
\left(  \left(\pdt{F} \right)^\perp + \frac{F}{2t}  \right) \rho(F ,t) \, d\mu \,dt
\\
&=
\int_{t=a}^b 
\int_{\Sigma} 
\nu_M\cdot 
\left(  \pdt{F}   + \frac{F}{2t}  \right) \rho(F,t) \, d\mu \,dt
\\
&=
\int_{t=a}^b 
\int_{\Sigma} 
\nu_M\cdot 
 (-t)^{1/2} \pdt{} \left(\frac{F}{(-t)^{1/2}}\right) 
  |t|^{-m/2} \, \Phi\left( \frac{F}{|t|^{1/2}}\right) \, d\mu \,dt
\\
&=
\int_{t=a}^b 
\int_{\Sigma} 
\nu_M\cdot 
  \pdt{} \left(\frac{F}{|t|^{1/2}}\right) 
   \Phi\left( \frac{F}{|t|^{1/2}}\right)  |t|^{-(m-1)/2} \, d\mu \,dt
\\
&=
\int_{t=a}^b 
\int_{\Sigma} 
\nu_M\cdot 
  \pdt{\tF} \,
   \Phi(\tF) \, d\tmu \,dt
\end{align*}
Here we have used
\[
   \nu_M\cdot \left(\pdt{F} \right)^\perp = \nu_M \cdot \pdt{F}, 
\]
which is true since $\nu_M$ is perpendicular to $\Gamma(t)$.

Recall that $\nu_M$ is a unit vector perpendicular to $\Gamma(t)$ and therefore
also to $\tF(M,t)$.  (In the case of Brakke Flow, $\nu_M$ is a vector of length at most one that is perpendicular to $\Gamma(t)$.)  Therefore,
\[
    \nu_M\cdot \pdt{\tF}   \le \left| \left(\pdt{\tF} \right)^\perp \right|.
\]
Thus
\[
 Q \le A(\tF,a,b)
\]
where
\[
A(\tF,a,b) 
=
\int_{t=a}^b 
\int_{\Sigma} 
\left| \left( \pdt{} \tF \right)^\perp \right|
   \Phi\left( \tF(x,t) \right) \, d\tmu \,dt
\]
The expression for $A(\tF,a,b)$ has a simple 
geometric meaning: it is the $m$-dimensional $\Phi$-area
  swept out by the $\tF(\Sigma,t)$
from $t=a$ to $t=b$. 
In other words, $A(\tF,a,b)$ is the $\Phi$-area of the immersion
\[
   \tF: \Sigma\times [a,b] \to \RR^n.
\]

We have shown:

\begin{theorem}\label{sweep-out-theorem}
Suppose $t\in (-\infty,0)\mapsto M(t)$ is an $m$-dimensional Brakke flow
with boundary $\Gamma(\cdot)$, where $\Gamma(t)=F(\Sigma,t)$.
For each $T<0$, let $N(T)$ be the surface swept out by
\[
     \tilde \Gamma(t):= \frac{\Gamma(t)}{|t|^{1/2}}
\]
from $t=T$ to $0$:
\[
    N(T) = \cup_{T\le t< 0} \tilde \Gamma(t).
\]
In terms of the parametrization $F$, 
\[
    N(T) = \tF( \Sigma\times [T,0)).
\]
Then
\[
      \Phi_m[\tM(t)] + \Phi_m[N(t)]
\]
is a decreasing function of $t$.
\end{theorem}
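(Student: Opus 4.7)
The plan is essentially a rearrangement of identities already assembled above. The monotonicity derivation preceding the theorem gives, for any $a<b<0$,
\[
\Phi_m[\tM(b)] - \Phi_m[\tM(a)] \;\le\; A(\tF,a,b),
\]
and the excerpt has just identified $A(\tF,a,b)$ as the $\Phi$-area (counted with multiplicity) of the immersion $\tF:\Sigma\times[a,b]\to\RR^n$. The only missing ingredient is to re-express the right-hand side in terms of the $N(t)$ appearing in the theorem statement.

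Since $N(T) = \tF(\Sigma\times[T,0))$ by definition, the same geometric interpretation yields
\[
\Phi_m[N(T)] \;=\; A(\tF,T,0),
\]
and, because the single time slice $\{t=b\}$ contributes nothing to $m$-dimensional Hausdorff measure, additivity in time gives
\[
A(\tF,a,b) \;=\; A(\tF,a,0) - A(\tF,b,0) \;=\; \Phi_m[N(a)] - \Phi_m[N(b)].
\]
Substituting into the monotonicity estimate and rearranging produces
\[
\Phi_m[\tM(b)] + \Phi_m[N(b)] \;\le\; \Phi_m[\tM(a)] + \Phi_m[N(a)],
\]
which is precisely the decrease of $t \mapsto \Phi_m[\tM(t)] + \Phi_m[N(t)]$ on $(-\infty,0)$.

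No serious obstacle is anticipated: once the geometric meaning of $A(\tF,\cdot,\cdot)$ is accepted, the argument is bookkeeping. The only mildly delicate point is that $\Phi_m[N(t)]$ may tend to $+\infty$ as $t\to 0^-$ (since $\tF$ blows up there); the monotonicity statement is unaffected, as the inequality comparing values at $a<b<0$ remains meaningful in $[0,+\infty]$, and if either $\Phi_m[\tM(a)]$ or $\Phi_m[N(a)]$ is infinite the conclusion at $t=a$ is vacuous.
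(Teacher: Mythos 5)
Your argument is correct and is essentially identical to the paper's own proof: both take the monotonicity estimate $\Phi_m[\tM(b)]-\Phi_m[\tM(a)]\le A(\tF,a,b)$, use additivity of the swept-out $\Phi$-area in time to write $A(\tF,a,b)=\Phi_m[N(a)]-\Phi_m[N(b)]$, and rearrange. The remark about possible infinite values near $t=0$ is a harmless extra observation.
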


\begin{proof}
We showed above that
\begin{align*}
\Phi[\tM(b)] - \Phi[\tM(a)]
&\le
\Phi[ F| \Sigma \times [a,b] ]  \\
&=
\Phi[F|\Sigma\times [a,0)] - \Phi[F|\Sigma\times [b,0)]  \\
&=
\Phi[N(a)] - \Phi[N(b)].
\end{align*}
\end{proof}

\begin{corollary}\label{mono-corollary}
If the areas of the $M(t)$ are bounded above, or, more generally, if 
\[
     \lim_{t\to -\infty}\Phi_m[\tM(t)]  = 0,
\]
then
\begin{align*}
\Phi_m[\tM(t)] 
&\le \Phi_m[\tF | \Sigma\times (-\infty,t] ]  \\
&\le \Phi_m[\tF | \Sigma\times (-\infty,0)]
\end{align*}
for all $t\in (-\infty,0)$.  
\end{corollary}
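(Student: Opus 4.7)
The plan is to apply the preceding theorem on the interval $[a,t]$ and let $a\to -\infty$. Concretely, the theorem gives, for any $a<t<0$,
\[
\Phi_m[\tM(t)] + \Phi_m[N(t)] \le \Phi_m[\tM(a)] + \Phi_m[N(a)].
\]

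The key observation is that $N(a)=\tF(\Sigma\times[a,0))$ and $N(t)=\tF(\Sigma\times[t,0))$ are parametrized by nested time intervals whose difference is $\Sigma\times[a,t)$. Since $\Phi$-area of an immersed variety is defined by integration over the parameter domain with multiplicity, it is additive under disjoint decomposition of that domain; hence
\[
\Phi_m[N(a)] = \Phi_m[\tF \,|\, \Sigma\times[a,t]] + \Phi_m[N(t)].
\]
Substituting and cancelling $\Phi_m[N(t)]$ from both sides gives
\[
\Phi_m[\tM(t)] \le \Phi_m[\tM(a)] + \Phi_m[\tF \,|\, \Sigma\times[a,t]].
\]

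Now I would let $a\to -\infty$. The first term on the right tends to $0$ by hypothesis, while by monotone convergence (the integrand in the earlier formula for $A(\tF,a,t)$ is nonnegative) the second term increases to $\Phi_m[\tF \,|\, \Sigma\times(-\infty,t]]$. This establishes the first inequality of the corollary. The second inequality is then immediate from $(-\infty,t]\subset (-\infty,0)$ together with nonnegativity of the integrand. To dispose of the stronger hypothesis, note that $\Phi_m\le (2\pi)^{-m/2}$ and $\Hh^m(\tM(t)) = |t|^{-m/2}\,\Hh^m(M(t))$, so a uniform area bound on the $M(t)$ forces $\Phi_m[\tM(t)]\to 0$ as $t\to -\infty$.

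I do not anticipate a genuine obstacle: the corollary is essentially the $a\to -\infty$ limit of the already-proved theorem. The only point requiring any care is the additivity of $\Phi$-area across a disjoint decomposition of the parameter domain, and this follows at once from the integral definition of $A(\tF,a,b)$ given earlier in the section.
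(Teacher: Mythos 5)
Your argument is correct and is essentially the paper's: the inequality $\Phi_m[\tM(t)]\le \Phi_m[\tM(a)] + \Phi_m[\tF\,|\,\Sigma\times[a,t]]$ that you recover by undoing the additivity step is exactly the first display in the proof of the preceding theorem, and the corollary follows from it by letting $a\to-\infty$ just as you do, with the same treatment of the bounded-area hypothesis. (It is marginally cleaner to quote that display directly rather than cancel $\Phi_m[N(t)]$, since the cancellation tacitly assumes $\Phi_m[N(t)]<\infty$, but the argument is the same.)
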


Because of the corollary, it is useful to have upper bounds
for $\Phi_m[\tF | \Sigma\times (-\infty,0)]$.  The next section gives
an upper bound in the case of boundaries that move by translation.

\section{Translators}

\begin{theorem}\label{main-calculation}
Suppose that $\Sigma$ is a compact, embedded
$(m-1)$-dimensional manifold in $\RR^{n-1}$, and suppose that
\begin{align*}
    &F: \Sigma\times \RR \to \RR^n, \\
    &F(x,t) = (x,0) + (a+t)v\ee_n.
 \end{align*}
Let $S=\tF(\Sigma\times (-\infty,0))$.  Then
\begin{align*}
\Phi_m[S]
&\le
 \operatorname{entropy}(\Sigma) + \Theta( \{rx: x\in \Sigma, \, r\ge 0\}) 
\\
&\le 
\entropy(\Sigma)  + \mcd(\Sigma).
\end{align*}
\end{theorem}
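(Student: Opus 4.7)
The plan is to pick the parametrization $\lambda := (-t)^{-1/2}\in(0,\infty)$ of $S$, so that
$$\tF(x,\lambda) = \bigl(\lambda x,\, h(\lambda)v\ee_n\bigr),\qquad h(\lambda) := \lambda a - 1/\lambda,\qquad h'(\lambda) = a + 1/\lambda^2,$$
and then to split the resulting area integral via a pointwise triangle inequality. Fixing an orthonormal frame $e_1,\dots,e_{m-1}$ for $T_x\Sigma\subset\RR^{n-1}$ and computing the Gram determinant of the tangent vectors $(\lambda e_i,0)$ and $(x,h'(\lambda)v\ee_n)$ (decomposing $x=x^T+x^N$ into components parallel and perpendicular to $T_x\Sigma$) yields
$$d\Hh^m_S = \lambda^{m-1}\sqrt{|x^N|^2 + h'(\lambda)^2 v^2}\;d\Hh^{m-1}_\Sigma\,d\lambda.$$

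Applying $\sqrt{A^2+B^2}\le A+B$ decomposes $\Phi_m[S]\le I_1+I_2$ along the two summands. For $I_1$ (the $|x^N|$ piece) I would drop the vertical Gaussian factor using $e^{-(hv)^2/4}\le 1$; the remaining integrand $\lambda^{m-1}|x^N|\,d\Hh^{m-1}_\Sigma\,d\lambda$ is exactly the $m$-dimensional area element on the cone $C(\Sigma)=\{rx:r\ge 0,\,x\in\Sigma\}\subset\RR^{n-1}$ under $(x,\lambda)\mapsto\lambda x$. Therefore $I_1\le\Phi_m[C(\Sigma)]$, and a standard spherical-coordinate evaluation of $\int_C\Phi_m\,d\Hh^m$ gives $\Phi_m[C(\Sigma)]=\Theta(C(\Sigma))$.

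For $I_2$ (the $|h'v|$ piece) I would factor the ambient Gaussian, $\Phi_m(\lambda x,hv)=\Phi_{m-1}(\lambda x)\Phi_1(hv)$, and integrate over $\Sigma$ first: the inner integral is $\Phi_{m-1}[\lambda\Sigma]\le\entropy(\Sigma)$ because $\lambda\Sigma$ is a dilation of $\Sigma$. This reduces the problem to
$$I_2 \le \entropy(\Sigma)\int_0^\infty |v\,h'(\lambda)|\,\Phi_1\bigl(h(\lambda)v\bigr)\,d\lambda,$$
and by the one-dimensional coarea formula the remaining integral equals $\int_\RR \Phi_1(z)N(z)\,dz$, where $N(z) := \#\{\lambda>0 : h(\lambda)v = z\}$.

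The hard part is that $h$ is non-monotone exactly when $a<0$, so one does not immediately get $N\le 1$. The fix is that for $a<0$, $h$ attains a unique maximum $h(1/\sqrt{-a})=-2\sqrt{-a}$, so the image of $\lambda\mapsto h(\lambda)v$ lies in a single open half-line not containing $0$; consequently $N\le 2$ on this half-line, giving $\int_\RR\Phi_1 N\le 2\int_{-\infty}^0\Phi_1 = 1$. When $a\ge 0$, $h'>0$ forces $N\le 1$ and again the integral is $\le 1$. So in all cases $I_2\le\entropy(\Sigma)$, and combining gives $\Phi_m[S]\le\entropy(\Sigma)+\Theta(C(\Sigma))$; the second asserted inequality is just the zero-translation case of the definition of $\mcd$.
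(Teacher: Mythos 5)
Your argument is correct and is essentially the paper's own proof with the general slicing step inlined: your pointwise bound $\sqrt{|x^N|^2+h'(\lambda)^2v^2}\le |x^N|+|h'(\lambda)v|$ is exactly the inequality $J+|\nabla_S h|\ge 1$ underlying Lemma~\ref{slicing} and Theorem~\ref{gauss-slicing}, and your two terms $I_1$, $I_2$ are precisely the paper's cone term (Lemma~\ref{cone-lemma}) and root-counting term (Lemma~\ref{calculus}). The only real difference is that you count preimages by a monotonicity/critical-point analysis of $h$ instead of the quadratic formula; your case split is in fact the corrected one (the quadratic should read $avr^2-yr-v=0$, so it is $a<0$ that gives at most two roots, confined to a half-line missing $0$, and $a\ge 0$ that gives at most one), and either way the key bound $\int N(z)\Phi_1(z)\,dz\le 1$ holds, so the conclusion is unaffected.
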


\begin{proof}
According to Theorem~\ref{gauss-slicing} below, for any $m$-dimensional submanifold $S$ of $\RR^n$, 
\begin{equation}\label{slicing-formula}
\Phi_m[S]
\le
\Phi_m[\Pi(S)] + \int_{y\in \RR} \Phi_{m-1}[S^y] \Phi_1(y) \, dy ,
\end{equation}
where  $\Phi_m[\Pi(S)]$ is the $\Phi$-area (counting multiplicity) of the projection of $S$ to the horizontal $(n-1)$-plane, and where $S^y$ is the horizontal slice
\[
  S^y:= \{x\in \RR^{n-1}: (x,y)\in S\}.
\]

In our case, 
\[
F(x,t) = (x, (a+t)v )
\]
so
\[
\tF(x,t) = (|t|^{-1/2}x, |t|^{-1/2}(a+t) v).
\]
Write $r=|t|^{-1/2}$, so $t= - r^{-2}$.  Thus
\begin{equation}\label{form}
  S = \{ (rx,  (ra-r^{-1}) v ): r>0, \, x\in \Sigma\}.
\end{equation}

From~\eqref{form}, we see that the projection $\Pi(S)$ of $S$ to the horizontal
plane is precisely $C(\Sigma)=\{rx: r>0, \, x\in \Sigma\}$, the cone over $\Sigma$:
\begin{equation}\label{cone-projection}
\Pi(S) = C(\Sigma).
\end{equation}

For each $y\in \RR$, let $\Rr(y)$ be the set of $r>0$ such that 
\[
    (r a - r^{-1}) v = y,
\]
and let $n(y)$ be the number of elements of $\Rr(y)$.
From~\eqref{form}, we see that
\[
S^y = \cup_{r\in \Rr(y)} r\Sigma.
\]
Thus
\begin{equation}\label{horizontal-slice}
\begin{aligned}
\Phi_m[S^y]
&=
\sum_{r\in \Rr(y)} \Phi_m[r\Sigma]  \\
&\le
\sum_{r\in \Rr(y)} \entropy \Sigma  \\
&=
n(y) (\entropy \Sigma).
\end{aligned}
\end{equation}

By~\eqref{slicing-formula}, \eqref{cone-projection}, and~\eqref{horizontal-slice},
\begin{align*}
\Phi_m[S]
&\le
\Phi_m[C(\Sigma)] + (\entropy \Sigma) \int n(y) \Phi_1(y)\,dy.
\end{align*}
The asserted inequality follows, because simple calculations (see Lemmas~\ref{calculus} and~\ref{cone-lemma} below) 
show that
\[
  \int n(y) \Phi_1(y)\,dy \le 1,
\]
and that, for any $m$-dimensional cone $C$ with vertex $0$, 
\[
  \Phi_m[C] = \Theta(C).
\]
\end{proof}

\begin{lemma}\label{calculus}
Let $n(y)$ be the number of $r>0$ such that $(ra-r^{-1})v=y$, i.e., such that
$avr^2 - yr + v=0$.  Then
\[
  \int n(y) \Phi_1(y)\,dy \le 1.
\]
\end{lemma}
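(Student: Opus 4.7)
The plan is to translate the root count into a change-of-variables integral. Writing $h(r) = v(ar + r^{-1})$ for $r > 0$, the quadratic $avr^2 - yr + v$ vanishes if and only if $y = h(r)$, so $n(y)$ equals the number of preimages of $y$ under $h$. The area/coarea formula then gives
\[
   \int_\RR n(y)\,\Phi_1(y)\,dy = \int_0^\infty \Phi_1(h(r))\,|h'(r)|\,dr,
\]
and it suffices to bound the right-hand side by analyzing $h$, using $h'(r) = v(a - r^{-2})$. The natural split is on the sign of $a$.

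If $a \le 0$ (and $v \ne 0$), then $h'$ has constant sign on $(0,\infty)$, so $h$ is strictly monotone. The change-of-variables integral collapses to $\int_{h((0,\infty))}\Phi_1(y)\,dy$, which is at most $\int_\RR \Phi_1(y)\,dy = 1$.

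If $a > 0$, then $h$ has a unique critical point at $r_0 = 1/\sqrt{a}$, and a direct computation gives $h(r_0) = 2v\sqrt{a}$. Since $h(r) \to \pm\infty$ (with the sign of $v$) as $r \to 0^+$ and as $r \to \infty$, this critical point is a global extremum, and the image of $h$ is a half-line $I$ with endpoint $2v\sqrt{a}$ extending to infinity on the same side as $v$. In particular $I \subseteq [0,\infty)$ when $v > 0$ and $I \subseteq (-\infty,0]$ when $v < 0$, and each interior point of $I$ has exactly two preimages under $h$. Splitting the change-of-variables integral at $r_0$ therefore yields
\[
   \int_\RR n(y)\,\Phi_1(y)\,dy = 2\int_I \Phi_1(y)\,dy \le 2\int_0^\infty \Phi_1(y)\,dy = 1,
\]
where the last inequality uses the evenness of $\Phi_1$ and its total integral of $1$.

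The main delicacy will be the sign bookkeeping needed to ensure that $I$ does not straddle the origin --- this is precisely what keeps the bound at $1$ rather than $2$, and it follows from the observation that $h(r_0) = 2v\sqrt{a}$ has the same sign as the infinite limits of $h$ on both ends of $(0,\infty)$.
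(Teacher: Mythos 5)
Your proof is correct and takes essentially the same route as the paper: in both arguments, when $a\le 0$ there is at most one positive solution for each $y$ so the integral is at most $\int \Phi_1 = 1$, while for $a>0$ there are exactly two positive solutions precisely when $y$ lies beyond $2\sqrt{a}\,|v|$ on the same side of the origin as $v$, giving the bound $2\int_{2\sqrt{a}|v|}^{\infty}\Phi_1(y)\,dy \le 1$. The only cosmetic difference is that you obtain the root count via the area formula and a critical-point analysis of $h(r)=v(ar+r^{-1})$, whereas the paper reads off the number and signs of the roots directly from the quadratic formula.
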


\begin{proof}
The roots $r$ are given by
\[
   r = \frac{y \pm \sqrt{y^2 - 4av^2}}{2av}.
\]
If $a<0$, then exactly one of the roots is positive, so $n(y)=1$ for all $y$ and
therefore
\[
 \int n(y) \Phi_1(y)\,dy  = \int \Phi_1(y)\,dy = 1.
\]
Now suppose that $a>0$.  Let us also suppose that $v>0$. (The case $v<0$ is essentially the same.)  If $y>2\sqrt{a}v$, then there are two real roots, both positive.
If $|y|< 2\sqrt{a}v$, there are no real roots.  If $y< -2\sqrt{a}v$, there are two real roots, both negative.  Thus
\begin{align*}
\int n(y) \Phi_1(y)\,dy
&=
2\int_{2\sqrt{a}v}^\infty \Phi_1(y)\,dy  \\
&\le
2\int_0^\infty \Phi_1(y)\,dy \\
&= 1. 
\end{align*}
\end{proof}

\begin{lemma}\label{cone-lemma}
If $C$ is an $m$-dimensional cone with vertex at the origin, then
\[
  \Phi_m[C] = \Theta(C).
\]
\end{lemma}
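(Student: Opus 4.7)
The plan is to parametrize the cone using its spherical link. Let $L := C \cap \partial \BB(0,1)$, the intersection of $C$ with the unit sphere. Because $C$ is a cone with vertex at the origin, the map $(r,\omega) \in (0,\infty) \times L \mapsto r\omega \in C$ is essentially a parametrization of $C$ under which the $m$-dimensional area element factors as $d\Hh^m|_C = r^{m-1}\,dr\,d\Hh^{m-1}|_L$. This single structural observation decouples the radial and angular integrations in both quantities of interest, and reduces the lemma to a one-dimensional integral identity.

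First I would compute $\Theta(C)$. Taking $r=1$ in the definition of density and using the above parametrization,
\[
\area(C \cap \BB(0,1)) = \int_L \int_0^1 r^{m-1}\,dr\,d\Hh^{m-1}(\omega) = \frac{\Hh^{m-1}(L)}{m},
\]
so $\Theta(C) = \Hh^{m-1}(L)/(m\omega_m)$.

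Next I would compute $\Phi_m[C]$. Since $\Phi_m(x)$ is radial, the angular integration just contributes the factor $\Hh^{m-1}(L)$, giving
\[
\Phi_m[C] = \Hh^{m-1}(L) \int_0^\infty \Phi_m(r)\, r^{m-1}\,dr.
\]
The substitution $s = r^2/4$ turns the radial integral into a standard Gamma integral, yielding a constant times $\Gamma(m/2) \, \Hh^{m-1}(L)$ that depends only on the normalization of $\Phi_m$.

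Finally, equating the two expressions, the lemma reduces to the classical identity $\omega_m = \pi^{m/2}/\Gamma(m/2+1)$ together with the duplication $\Gamma(m/2+1) = (m/2)\Gamma(m/2)$. There is no real obstacle: the content is the observation that the cone structure separates radial and angular integration, plus a routine $\Gamma$-function bookkeeping step. The only item to watch is that the Gaussian normalization in $\Phi_m$ is compatible with the normalization of $\omega_m$, since that is exactly what makes the two constants cancel.
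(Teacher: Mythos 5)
Your proof is correct and rests on the same underlying idea as the paper's: the cone structure separates the radial and angular integrations, reducing the identity to a one-variable computation. The paper just finishes more slickly: instead of evaluating the radial integral via $\Gamma(m/2)$ and invoking $\omega_m=\pi^{m/2}/\Gamma(\tfrac m2+1)$, it writes
$\Phi_m[C]=\Theta(C)\int_0^\infty (4\pi)^{-m/2}e^{-r^2/4}\,d(\omega_m r^m)=\Theta(C)\int_{\RR^m}\Phi_m\,d\Hh^m=\Theta(C)$,
i.e.\ it compares $C$ with $\RR^m$ (a cone of density $1$), so the Gaussian normalization $\int_{\RR^m}\Phi_m=1$ does all the bookkeeping that your $\Gamma$-function step carries out by hand.
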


\begin{proof}
The volume of the cone in $\BB(0,r+dr)\setminus \BB(0,r)$
is $d(\omega_m\Theta(C) r^m)$.
Thus
\begin{align*}
\Phi_m[C] 
&= \int_{r=0}^\infty \frac1{(4\pi)^{m/2}} e^{-r^2/4} d(\omega_m\Theta(C) r^m) \\
&= \Theta(C) \int_{r=0}^\infty \frac1{(4\pi)^{m/2}} e^{-r^2/4} d(\omega_m r^m). \\
&= \Theta(C) \int_{\RR^m} \Phi_m(x)\,dx \\
&= \Theta(C).
\end{align*}
\end{proof}

\begin{theorem}\label{main-translator-theorem}
Suppose that  $M$ is an $m$-dimensional compact surface in $\RR^n$ that translates with velocity $v\ee_n$ under mean curvature flow, and suppose that $\partial M$ lies in a horizontal hyperplane.   Then
\[
  \entropy(M) \le \mcd(\partial M) + (\entropy (\partial M)).
\]
More generally, if $\partial M=\cup_{i=1}^k \Gamma_i$, where each $\Gamma_i$ lies in  a horizontal plane $P_i$, then
\[
  \entropy(M) \le \sum_{i=1}^k(\mcd(\Gamma_i) + (\entropy (\Gamma_i)).
\]
\end{theorem}

\begin{proof}
Consider the MCF
\[
 M(t):=  M + (t+1)v\ee_n.
\] 
By Corollary~\ref{mono-corollary} and Theorem~\ref{main-calculation}, 
\[
 \Phi_m[\tM(t)] \le \mcd(\partial M) + (\entropy (\partial M))
\]
for all $t<0$.
Now $\tM(-1)= M(-1) = M$, so, in particular,
\[
  \Phi_m[M] \le \mcd(\partial M) + (\entropy (\partial M)).
\]
Now let $M'$ be any surface obtained from $M$ by translating and dilating.
Then, by the same argument,
\[
 \Phi_m[M'] \le \mcd(\partial M') + (\entropy(\partial M')).
\]
But $\mcd(\partial M')=\mcd(\partial M)$
 and $\entropy(\partial M')=\entropy(\partial M)$. 
Thus
\[
 \Phi_m[M'] \le \mcd(\partial M) + (\entropy(\partial M)).
\] 
Taking the supremum over all such $M'$ gives
\[
 \entropy(M) \le \mcd(\partial M) +  (\entropy(\partial M)).
\]
The assertion for the case $\partial M=\cup \Gamma_i$ is proved in the same way.
\end{proof}

\section{Slicing}

\begin{lemma}\label{slicing}
Let $S$ be a smooth, $m$-dimensional manifold (possibly with boundary) in $\RR^n$.
For $x\in \RR^{n-1}$ and $y\in \RR$, let
\begin{align*}
   S_x &: = \{y\in \RR: (x,y)\in M\}, \\
   S^y &= \{x\in \RR^{n-1}: (x,y)\in M\}.
\end{align*}
Let 
\begin{align*}
&\Pi: \RR^n\to \RR^{n-1}, \\
&\Pi(x,y) = x  \qquad(x\in \RR^{n-1}, \, y\in \RR).
\end{align*}
Suppose $f: \RR^{n-1}\to \RR$ and $g: \RR\to\RR$ are smooth, nonnegative functions.
Then
\begin{equation}\label{slicing-inequality}
\begin{aligned}
\int_{(x,y)\in S} f(x)g(y)\, d\Hh^m(x,y)  
&\le
\int_{x\in \Pi(S)} \left( \sum_{y\in S_x} g(y) \right) f(x)\,d\Hh^{m-1} x \\
&\quad+
\int_{y\in \RR} \left( \int_{x\in S^y} f(x) \,d\Hh^{m-1}x \right) g(y) \, dy.
\end{aligned}
\end{equation}
\end{lemma}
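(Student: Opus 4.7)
My plan is to derive~\eqref{slicing-inequality} from a pointwise identity relating the two natural Jacobians on $S$: the Jacobian $J_\Pi$ of the restricted horizontal projection $\Pi|_S : S \to \RR^{n-1}$, and the norm $|\nabla^S y|$ of the tangential gradient of the height function $y$. The central claim is that
\[
    J_\Pi(p)^2 + |\nabla^S y|(p)^2 = 1 \qquad \text{for every } p \in S,
\]
which in particular gives $J_\Pi + |\nabla^S y| \ge 1$ pointwise. To verify the identity, set $\alpha = |\nabla^S y|(p)$ and, when $\alpha > 0$, choose an orthonormal basis $v_1, \dots, v_m$ of $T_p S$ with $v_1 = \nabla^S y / \alpha$. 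Because $\nabla^S y$ is the tangential projection of $e_n$, a short check gives $v_i \cdot e_n = 0$ for all $i \ge 2$, so $v_2, \dots, v_m$ lie in $\RR^{n-1} \times \{0\}$, while $v_1$ decomposes as $\alpha e_n + w$ with $w$ a horizontal vector of length $\sqrt{1 - \alpha^2}$ orthogonal to $v_2, \dots, v_m$. Hence $\Pi(v_1), \dots, \Pi(v_m)$ are mutually orthogonal with lengths $\sqrt{1 - \alpha^2}, 1, \dots, 1$, so $J_\Pi(p) = \sqrt{1 - \alpha^2}$. The degenerate cases $\alpha = 0$ and $\alpha = 1$ are immediate.

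With this pointwise inequality in hand, I would split
\[
    \int_S f(x) g(y) \, d\Hh^m \;\le\; \int_S f(x) g(y) J_\Pi \, d\Hh^m + \int_S f(x) g(y) |\nabla^S y| \, d\Hh^m
\]
and treat each term separately. For the first term, the area formula applied to the Lipschitz map $\Pi|_S$ converts the $J_\Pi$-weighted integral into one over $\Pi(S)$ in which the fiber $\Pi^{-1}(x) \cap S = \{(x,y) : y \in S_x\}$ is summed over, producing $\int_{\Pi(S)} f(x) \bigl( \sum_{y \in S_x} g(y) \bigr) \, d\Hh^m(x)$. For the second term, the coarea formula applied to $y : S \to \RR$, whose Jacobian is exactly $|\nabla^S y|$, foliates $S$ into level sets isometric to the horizontal slices $S^{y_0}$, producing $\int_\RR g(y) \bigl( \int_{S^y} f(x) \, d\Hh^{m-1}(x) \bigr) dy$. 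Adding the two terms yields the slicing inequality.

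The only substantive step is the orthogonality computation that produces $J_\Pi^2 + |\nabla^S y|^2 = 1$; once this identity is in place, the remainder of the argument is a routine application of the Federer area and coarea formulas, valid at every smooth point of $S$. The manifold boundary of $S$, having Hausdorff dimension $m - 1$, makes no contribution to any of the $m$-dimensional integrals, so the possible presence of $\partial S$ causes no difficulty.
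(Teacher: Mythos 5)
Your proof is correct and follows essentially the same route as the paper: the pointwise inequality $J_\Pi + |\nabla_S h| \ge 1$ (which the paper states via $J_\Pi = \sqrt{1-|\nabla_S h|^2}$ and you verify by the orthonormal-basis computation), followed by the area formula for $\Pi|_S$ and the coarea formula for the height function, then adding the two identities. The only difference is that you supply the Jacobian computation the paper leaves implicit; nothing further is needed.
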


\begin{proof}
Let 
\[
   h: (x,y)\in \RR^{n-1}\times \RR \mapsto y
\]
be the height function.  
Then $\nabla_Sh$
is the projection of $\nabla h=\ee^n$ to $\Tan(S,\cdot)$.
Note that the Jacobian of the map $\Pi:S$ is $J = \sqrt{1 - |\nabla_Mh|^2}$, and thus
that 
\begin{equation}\label{triangle}
   J + |\nabla_Sh| \ge 1.
\end{equation}
By the area formula for $\Pi|S$,
\begin{equation}\label{area}
\begin{aligned}
\int_{x\in \Pi(S)} \left(\sum_{y\in S_x}  g(y) \right) f(x) \,d\Hh^{m-1}x 
&=
\int_{x\in \Pi(S)} \left(\sum_{y\in S_x} f(x) g(y) \right) \,d\Hh^{m-1}x  \\
&=
\int_{(x,y)\in S}  f(x)g(y) J(x,y) \,d\Hh^m(x,y).
\end{aligned}
\end{equation}
Likewise, by the coarea formula for $h|S$,
\begin{equation}\label{coarea}
\begin{aligned}
\int_{y\in \RR} \left(\int_{x\in S^y} f(x) \,d\Hh^{m-1}x\right) g(y) \,dy
&=
\int_{y\in \RR} \int_{x\in S^y} f(x)g(y) \,d\Hh^{m-1}x\,dy  \\
&=
\int_{(x,y)\in S} f(x)g(y)\, |\nabla_S h|\,d\Hh^m(x,y).
\end{aligned}
\end{equation}
Now add~\eqref{area} and~\eqref{coarea} and use the inequality~\eqref{triangle}.
\end{proof}

\begin{remark}\label{counting-remark}
Note that if $g\le 1$, then $\sum_{y\in S_x}g(y)$ is less than or equal to the number of points in $S_x$, which is the multiplicity of the projection $\Pi|S$.
Thus (in this case) the first integral on the right hand side in~\eqref{slicing-inequality} is bounded above by
the integral of $f$ over $\Pi(S)$, counting multiplicity.
\end{remark}

\begin{theorem}\label{gauss-slicing}
Let $S$ be an $m$-dimensional submanifold in $\RR^n$.
Then
\begin{align*}
\Phi_m[S] 
&\le \Phi_m[\Pi|S] + \int_{y\in \RR} \Phi_{m-1}[S^y] \Phi_1(y)\,dy   \\
\end{align*}
where $\Phi_m[\Pi|S]$ is the $\Phi_m$-area of $\Pi(S)$, counting multiplicity.
\end{theorem}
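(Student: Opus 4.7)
The plan is to apply Lemma~\ref{slicing} with the weights $f(x) = \Phi_{m-1}(x)$ and $g(y) = \Phi_1(y)$. The crucial observation is that the Gaussian factors multiplicatively over the orthogonal splitting $\RR^n = \RR^{n-1}\times\RR$:
\[
\Phi_m(x,y) \;=\; \Phi_{m-1}(x)\,\Phi_1(y),
\]
which follows at once from $(2\pi)^{-m/2} = (2\pi)^{-(m-1)/2}(2\pi)^{-1/2}$ together with the splitting of the exponent $-(|x|^2+y^2)/4$. With this choice, the left-hand side of (\ref{slicing-inequality}) becomes $\int_S \Phi_{m-1}(x)\Phi_1(y)\,d\Hh^m = \Phi_m[S]$, and the coarea term on the right is $\int_{\RR} \Phi_{m-1}[S^y]\,\Phi_1(y)\,dy$ by the very definition of $\Phi_{m-1}[S^y]$.

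The substantive step is to recognize that the first term on the right of (\ref{slicing-inequality}) is $\Phi_m[\Pi|S]$. Since $\Phi_1$ attains its maximum value $(2\pi)^{-1/2}$ at $y=0$, the pointwise bound $g(y)\le (2\pi)^{-1/2}$ yields
\[
\sum_{y\in S_x} \Phi_1(y) \;\le\; (\#S_x)\cdot (2\pi)^{-1/2}.
\]
Multiplying by $\Phi_{m-1}(x)$ and using the identity $(2\pi)^{-1/2}\,\Phi_{m-1}(x) = \Phi_m(x)$ gives, pointwise in $x$,
\[
\Bigl(\sum_{y\in S_x}\Phi_1(y)\Bigr)\Phi_{m-1}(x) \;\le\; (\#S_x)\cdot \Phi_m(x).
\]
Integrating over $\Pi(S)$ then produces exactly the $\Phi_m$-area of $\Pi(S)$ counting multiplicity, i.e., $\Phi_m[\Pi|S]$. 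This is a sharpening of Remark~\ref{counting-remark}: that remark alone would leave a leftover factor of $\Phi_{m-1}$ rather than $\Phi_m$, and the uniform sup-norm bound on $\Phi_1$ is precisely what supplies the missing constant $(2\pi)^{-1/2}$ needed to upgrade $\Phi_{m-1}$ to $\Phi_m$.

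Assembling the two pieces yields the claimed inequality. I do not expect any real obstacle: once the multiplicative factorization of the Gaussian and the sup-norm bound $\Phi_1 \le (2\pi)^{-1/2}$ are noted, the theorem is an immediate specialization of Lemma~\ref{slicing}.
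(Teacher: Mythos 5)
Your proof is correct and follows essentially the same route as the paper: both apply Lemma~\ref{slicing} with $f=\Phi_{m-1}$, $g=\Phi_1$, after observing the factorization $\Phi_m(x,y)=\Phi_{m-1}(x)\,\Phi_1(y)$. The one point where you go beyond the written text concerns the projection term, and it is a genuine (if small) improvement: the paper simply cites Remark~\ref{counting-remark}, which uses only $g\le 1$ and therefore, read literally, bounds that term by the $\Phi_{m-1}$-weighted area of $\Pi(S)$ counting multiplicity --- a quantity larger than the stated $\Phi_m[\Pi|S]$ by the constant factor $\Phi_{m-1}(x)/\Phi_m(x)$. Your sharper pointwise bound $\Phi_1(y)\le \Phi_1(0)$ combined with the identity $\Phi_1(0)\,\Phi_{m-1}(x)=\Phi_m(x)$ is exactly what is needed to land on $\Phi_m[\Pi|S]$, and is surely what the author intended the Remark to convey. (A minor bookkeeping caveat: the paper's definition of $\Phi_m$ carries an internal inconsistency, $(2\pi)^{-m/2}$ versus the $(4\pi)^{-m/2}$ implicit in $\rho_m(\cdot,-1)$ and used in the later lemmas; your argument works verbatim under either convention, with $\Phi_1(0)$ equal to $(2\pi)^{-1/2}$ or $(4\pi)^{-1/2}$ accordingly, so this does not affect your proof.)
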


\begin{proof}
Note that for $(x,y)\in \RR^{m-1}\times \RR$,
\begin{align*}
\Phi_m(x,y) 
&= \frac1{(4\pi)^{m/2}} \exp\left( \frac{-|x|^2-|y|^2}{4} \right)   
\\
&=
\frac1{(4\pi)^{(m-1)/2}} \exp\left( \frac{-|x|}{4} \right)   
\frac1{(4\pi)^{1/2}} \exp\left( \frac{-|y|^2}{4} \right)
\\
&=
\Phi_{m-1}(x)\Phi_1(y).
\end{align*}
The assertion of the theorem follows immediately from Lemma~\ref{slicing}
(letting $f(x)=\Phi_{m-1}(x)$ and $g(y)=\Phi_1(y)$)
and Remark~\ref{counting-remark}.
\end{proof}

\section{The Maximal Density Ratio of a Convex Surface}

\begin{proposition}\label{mdr-proposition}
Let $U$ be a bounded, convex, open region in $\RR^{m+1}$.  Then
\[
  \mdr(\partial U)
   \le  \frac{{(m+1)}\omega_{m+1}}{\omega_m}.
\]
\end{proposition}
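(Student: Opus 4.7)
The plan is to exploit the classical monotonicity of surface area for nested convex bodies: if $K_1 \subset K_2$ are bounded convex bodies in $\RR^{m+1}$, then $\Hh^m(\partial K_1) \le \Hh^m(\partial K_2)$. I would apply this with $K_1 = U \cap \BB(x,r)$ and $K_2 = \BB(x,r)$, after which the desired bound reduces to the explicit computation $\Hh^m(\partial \BB(x,r)) = (m+1)\omega_{m+1} r^m$.

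First I would fix a ball $B = \BB(x,r)$ and observe that $U \cap B$ is convex (intersection of convex sets) and bounded, so its boundary $\partial(U \cap B)$ is an $m$-dimensional Lipschitz hypersurface that decomposes as
\[
  \partial(U\cap B) \;=\; (\partial U \cap \overline B) \cup (\overline U \cap \partial B).
\]
In particular, up to an $\Hh^m$-null set, $\partial U \cap B \subset \partial(U \cap B)$, so
\[
  \Hh^m(\partial U \cap B) \;\le\; \Hh^m(\partial(U \cap B)).
\]

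Next I would invoke the nested-convex-body comparison: the nearest-point projection $\pi \colon \RR^{m+1} \to \overline{U \cap B}$ is well-defined and $1$-Lipschitz because $U \cap B$ is closed and convex, and it maps $\partial B$ onto $\partial(U \cap B)$ (since every boundary point of $U \cap B$ is the closest point of $U \cap B$ to some point outside, hence to some point of $\partial B$ when $U\cap B \subset B$). Being $1$-Lipschitz, $\pi$ does not increase $\Hh^m$, so
\[
  \Hh^m(\partial(U \cap B)) \;\le\; \Hh^m(\partial B) \;=\; (m+1)\omega_{m+1}\, r^m.
\]
Dividing by $\omega_m r^m$ and taking the supremum over $(x,r)$ then yields the claimed bound.

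The only step that requires a bit of care is the surjectivity statement $\pi(\partial B) \supset \partial(U \cap B)$, which is where convexity of \emph{both} bodies matters; I would verify it by noting that for any $q \in \partial(U \cap B)$ there is a supporting half-space of $U \cap B$ at $q$, and following the outward normal ray from $q$ until it hits $\partial B$ produces a point whose nearest point in $\overline{U \cap B}$ is $q$. The rest is routine. (One can also bypass the projection argument by appealing directly to the Cauchy surface-area formula or the classical fact that surface area is monotone under inclusion of convex bodies, but the $1$-Lipschitz projection argument is self-contained.)
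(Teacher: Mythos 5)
Your proof is correct and uses essentially the same idea as the paper: the $1$-Lipschitz nearest-point projection onto a convex body, which does not increase $\Hh^m$, compared against the sphere area $(m+1)\omega_{m+1}r^m$. The paper is slightly more direct---it projects $\partial \BB(x,r)\setminus U$ onto $\overline U$ itself, whose image covers $\partial U\cap\BB(x,r)$---whereas you route through the intermediate body $U\cap\BB(x,r)$; both versions are fine (and your key inclusion $\partial U\cap\BB(x,r)\subset\partial(U\cap\BB(x,r))$ and the surjectivity of the projection from $\partial\BB(x,r)$ are justified correctly).
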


\begin{proof}
For $x\in \RR^m$, let $\pi(x)$ be the point in $\overline{U}$ closest to $x$.  Then
\[
  |\pi(x)-\pi(y)| \le |x-y|
\]
for all $x,y\in\RR^m$.  Let $Q = \partial \BB(x,r)\setminus U$.
Then $\pi$ is a distance-decreasing map from $Q$ onto $\BB(x,r)\cap\partial U$.
Thus
\[
\Hh^m(\partial \BB(x,r)) \ge \Hh^m(Q) \ge \Hh^m(\BB(x,r)\cap \partial U),
\]
so
\[
\frac{\Hh^m(\BB^m(x,r)\cap\partial U)}{\omega_mr^m}
\le
\frac{\Hh^m(\partial \BB(x,r))}{\omega_mr^m}
 = \frac{(m+1)\omega_{m+1}}{\omega_m}.
 \]
\end{proof}

\section{Boundaries with Vertical Pieces}\label{vertical-section}

Now suppose that $t\mapsto M(t)$ is a mean curvature flow of $m$-dimensional manifolds in $\RR^n$
and that the boundary of $M(t)$ is a fixed $(m-1)$-plane $\Gamma$.
According to Theorem~\ref{sweep-out-theorem}, 
\[
   \Phi_m[\tilde M(t)] + \Phi_m[\cup_{t\le \tau<0} |\tau|^{-1/2}\Gamma]
\]
is a decreasing function of $t$ for $t<0$.
In particular, for $T < t < 0$,
\begin{align*}
\Phi_m[\tilde M(t)] 
&\le 
\Phi_m[\tilde M(T)] + \Phi_m[\cup_{T\le \tau<0} |\tau|^{-1/2}\Gamma]
\\
&\le
\Phi_m[\tilde M(T)] + \Phi_m[\cup_{-\infty \le \tau<0} |\tau|^{-1/2}\Gamma]
\\
&\le
\Phi_m[\tilde M(T)] + \frac12
\end{align*}
since $Q:=\cup_{-\infty<\tau<0}$ is a halfspace if $0\notin\Gamma$ (in which case $\Phi_m[Q]=1/2$)
or is the $(m-1)$-plane $\Gamma$ if $0\in\Gamma$ (in which case $\Phi_m[Q]=0$).

Likewise, if $\Gamma$ is the union of $\ell$ fixed $(m-1)$-planes, then
\[
\Phi_m[\tilde M(t)] \le \Phi_m[\tilde M(T)] + \frac{\ell}2.
\]

Combining this reasoning with the analysis in the previous sections 
(see, in particular, Theorem~\ref{main-translator-theorem} and Proposition~\ref{mdr-proposition}) gives

\begin{theorem}
Suppose that $M$ is a compact $m$-dimensional translator in $\RR^n$ and that 
\[
 \partial M = (\cup_{i=1}^k\Gamma_i) \cup (\cup_{i=1}^\ell \Gamma'_i),
\]
where each $\Gamma_i$ is contained in a horizontal plane $P_i = \RR^{n-1}\times \{p_i\}$,
and
where each $\Gamma_i'$ is contained in $S_i\times \RR$, where $S_i$ is an $(m-2)$-plane in $\RR^{n-1}$.
Then
\begin{align*}
\entropy(M) 
&\le \sum_{i=1}^k(\mcd(\Gamma_i) + (\entropy (\Gamma_i)) + \frac{\ell}2
\\
&\le \sum_{i=1}^k( \mcd(\Gamma_i) + \mdr(\Gamma_i)) + \frac{\ell}2.
\end{align*}
In particular, if $n=m+1$ and if each $\Gamma_i$ is contained in the boundary of a convex region in $P_i$, then
\[
\entropy(M)
\le
k\left( 1 + \frac{m\omega_m}{\omega_{m-1}}\right) + \frac{\ell}2.
\]
\end{theorem}

\begin{bibdiv}

\begin{biblist}

\bib{EWW}{article}{
   author={Ekholm, Tobias},
   author={White, Brian},
   author={Wienholtz, Daniel},
   title={Embeddedness of minimal surfaces with total boundary curvature at
   most $4\pi$},
   journal={Ann. of Math. (2)},
   volume={155},
   date={2002},
   number={1},
   pages={209--234},
   issn={0003-486X},
   review={\MR{1888799}},
   doi={10.2307/3062155},
}

\bib{annuloids}{article}{
   author={Hoffman, David}, 
   author={Mart\'in, Francisco},
   author={White, Brian},
   title={Translating Annuli for Mean Curvature Flow},
   note={In preparation},
   date={2022},
}

\bib{huisken-asymptotic}{article}{
   author={Huisken, Gerhard},
   title={Asymptotic behavior for singularities of the mean curvature flow},
   journal={J. Differential Geom.},
   volume={31},
   date={1990},
   number={1},
   pages={285--299},
   issn={0022-040X},
   review={\MR{1030675}},
}

\bib{ilmanen}{article}{
   author={Ilmanen, Tom},
   title={Elliptic regularization and partial regularity for motion by mean
   curvature},
   journal={Mem. Amer. Math. Soc.},
   volume={108},
   date={1994},
   number={520},
   pages={x+90},
   issn={0065-9266},
   review={\MR{1196160}},
   doi={10.1090/memo/0520},
}

\bib{white-mcf-boundary}{article}{
   author={White, Brian},
   title={Mean Curvature Flow with Boundary},
   journal={Ars Inveniendi Analytica},
   note={arXiv:1901.03008 [math.DG]}
   date={2021},
}

\end{biblist}

\end{bibdiv}	

\end{document}